\documentclass[12pt]{amsart}
\textheight 23.5 cm
\textwidth  16 cm
\evensidemargin -4mm
\oddsidemargin -4mm
\topmargin-5mm
\usepackage{amssymb, amscd, amsmath, dev}

\theoremstyle{plain}
\newtheorem{thm}{Theorem}[section]

\newtheorem{cor}[thm]{Corollary}

\newtheorem{lem}[thm]{Lemma}

\newtheorem{rem}[thm]{Remark}

\def\A{{\mathcal A}}                              % polynomial ring
\def\C{{\mathbb C}}                               % complex numbers
\def\F{{\mathbb F}}                               % finite field
                            % additive one-dimensional group
                            % multiplicative one-dimensional group
                               % upper half plane
                               % matrix algebra
                               % natural numbers
                               % rational numbers
\def\P{{\Phi}}                                    %
\def\R{{\mathbb R}}                               % real numbers
\def\W{{\mathcal W}}                              % algebra of differential operators
\def\Z{{\mathbb Z}}                               % integers

\def\a{{\alpha}}                                  % alpha
\def\b{{\beta}}                                   % beta
\def\bp{{\overline{\partial}}}                    % overline
\def\bz{{\overline{z}}}                           % conjugate of z
                            % algebraic closure of k
\def\l{{\lambda}}                                 %
\def\m{{\mu}}                                     %
\def\n{{\mathcal N}}                              % 
\def\p{{\partial}}                                % partial differential 
\def\r{{\rho}}                                    %
\def\s{{\sigma}}                                  %
\def\x{{\zeta}}                                   %
\def\z{{\mathcal Z}}                              %

                      % automorphisms
                    % endomorphisms 
\def\GL{\operatorname{GL}}                        % general linear group
                      % Galois group
\def\Hom{\operatorname{Hom}}                      % hom functor
                      % projective special linear group
                      % projective special orthogonal group
                      % projective special unitary group
                      % projective symplectic group
                        % special linear group
                        % special orthogonal group
                        % special unitary group
                        % symplectic group
                    % spin group
                      % symmetric power
                         % orthogonal group
                         % unitary group

\def\det{\operatorname{det}}                      % determinant
                        % trace
                    % injective map
\def\iso{{\stackrel{\sim}{~\longrightarrow~}}}    % isomorphism
                     % 
                 % surjective map
\def\op{{\oplus}}                                 % direct sum
                                % tensor
                          % 
      % semidirect product
       % isomorphism (with a space)

\numberwithin{equation}{section}

\begin{document}
\title[Gelfand models for finite Coxeter groups]{On Gelfand models for finite Coxeter groups}
\author[Shripad M. Garge, Joseph Oesterl{\'e}]{Shripad M. Garge, Joseph Oesterl{\'e}}
\address{Shripad M. Garge. Department of Mathematics, Indian Institute of Technology Bombay, Powai, Mumbai. 400 076. INDIA.}
\email{shripad@math.iitb.ac.in}
\address{Joseph Oesterl\'e. Institut de Math{\'e}matiques de Jussieu, 16 rue Clisson, 75013 Paris. FRANCE.}
\email{oesterle@math.jussieu.fr}
%\date{\today}

\begin{abstract}
A Gelfand model for a finite group $G$ is a complex linear representation of $G$ that contains each of its irreducible representations with multiplicity one. 
For a finite group $G$ with a faithful representation $V$, one constructs a representation which we call the polynomial model for $G$ associated to $V$. 
Araujo and others have proved that the polynomial models for certain irreducible Weyl groups associated to their canonical representations are Gelfand models. 

In this paper, we give an easier and uniform treatment for the study of the polynomial model for a general finite Coxeter group associated to its canonical representation. 
Our final result is that such a polynomial model for a finite Coxeter group $G$ is a Gelfand model if and only if $G$ has no direct factor of the type $W(D_{2n}), W(E_7)$ or $W(E_8)$.
\end{abstract}

\maketitle

%% The following is a dedication to the memory of the first author's mother. 
%% It should come in Devnagari script. 
%% Kindly check with the first author to see if it has come correctly. 

\begin{centerline}
{\small {\dn \399wFpAdQyA aAI k\4\314w sO\314w mAltF m\314w gg\?{\qvb} yA\2QyA -\9{m}(yT\0}}
\end{centerline}

\section{Introduction}
Let $G$ be a finite group. 
A {\em Gelfand model} for $G$ is a complex linear representation of $G$ that contains each of its irreducible representations with multiplicity one. 
One is interested in finding ``natural'' Gelfand models for classes of finite groups.

Klyachko and others (\cite{K, IRS, Ba}) gave a construction of Gelfand models for the groups $W(A_n), W(B_n)$ and $W(D_{2n + 1})$. 
It is obtained by taking sum of inductions of certain one dimensional representations of involution centralizers, hence called {\em involution model}. 
It is easy to see that, for a finite group $G$, the dimension of an involution model for $G$ is equal to the dimension of a Gelfand model for $G$ (i.e., the sum of dimensions of all irreducible representations of $G$) if and only if every irreducible complex linear representation of $G$ can be realized over $\R$. 
Therefore, if a group $G$ has a non-real irreducible representation then any involution model for $G$ is never a Gelfand model. 
Thus, the study of involution models is rather restricted. 

In this paper, we study another approach for constructing a Gelfand model. 
For a finite group $G$ and a faithful representation $V$ of $G$, we define the {\em polynomial model for $G$ associated to $V$}, denoted by $\n(V)$, to be the space of complex valued polynomial functions on $V$ that are annihilated by all $G$-invariant differential operators with polynomial coefficients of negative degree (see $\S 2$ for more details). 
Araujo and others (\cite{AA, A, AB}) proved that the polynomial models associated to the canonical representations of the Weyl groups $W(A_n), W(B_n)$ and $W(D_{2n + 1})$ are Gelfand models. 

The purpose of this paper is to study polynomial models for all finite Coxeter groups, irreducible or not, associated to their canonical faithful representations. 
In Theorem \ref{thm:2}, we give another description of the polynomial model $\n(V)$ associated to a faithful representation $V$ of a finite group $G$.
Our description is easier to work with and gives much more information than the original description in \cite{AA}. 
For instance, it is quite clear from Theorem \ref{thm:2} that an irreducible representation of $G$ is always contained in any polynomial model. 
Thus, $\n(V)$ is a Gelfand model for $G$ if and only if the multiplicity of each irreducible representation of $G$ in $\n(V)$ is one. 

The question now is to compute the multiplicity of an irreducible representation of $G$ in its polynomial model $\n(V)$. 
If $G$ is a finite Coxeter group, then this question is related to the study of the fake degree of an irreducible representation of $G$. 
In Section 3, we recall some well-known facts about finite Coxeter groups and the fake degrees. 
Section 4 is devoted to proving the following main result.

\subsection*{Theorem \ref{thm:main}}
{\em Let $G$ be a finite Coxeter group and let $V$ be its canonical faithful representation. 
The polynomial model $\n(V)$ for $G$ is a Gelfand model if and only if $G$ has no direct factor of the type $W(D_{2n}), W(E_7)$ or $W(E_8)$.}

\subsection*{Remark} 
Every irreducible representation of a finite group $G$ is contained in any polynomial model for $G$ (see Theorem \ref{thm:2}). 
Hence a polynomial model for $G$ is a Gelfand model if and only if its dimension is equal to the dimension of a Gelfand model. 
On the other hand, as discussed above, the dimension of an involution model for a finite Coxeter group $G$ is always equal to the dimension of a Gelfand model for $G$. 
Therefore, an involution model is a Gelfand model for a finite Coxeter group $G$ if and only if it contains each of its irreducible representations. 

Thus, in the case of finite Coxeter groups, involution models and polynomials models have complementary properties and it would be interesting to study the interplay between these two models. 
We hope to take up this study in a subsequent paper.

We also believe that our description of the polynomial model in Theorem \ref{thm:2} can be used to describe Gelfand models for finite groups which are not Coxeter groups. 
A trivial example is that the polynomial model for a finite cyclic group $G$ associated to any of its one dimensional faithful representations is a Gelfand model for $G$. 

\subsection*{Acknowledgements}
The first author thanks D.-N. Verma for introducing him to Gelfand models. 
Thanks are also due to Dipendra Prasad, Ulf Rehmann and Anuradha Garge for their comments at various stages of this work. 
This work was done when the first author was visiting the Institut de Math{\'e}matiques de Jussieu, Paris, and Universit\"{a}t Bielefeld, Germany.
He acknowledges the hospitality of the members of both the places as well as a fellowship from ``R{\'e}gion Ile-de-France'' and the support from ``SFB 701: Spektrale Strukturen und Topologische Methoden in der Mathematik'' which made these visits possible. 
The authors also thank the anonymous referee for a careful and speedy report. 

\section{The polynomial model}
In this section, we define the polynomial model for a finite group $G$ together with a faithful $\C$-representation $V$. 
This is a certain subspace of the ring of complex valued polynomial functions on $V$. 
In Theorem \ref{thm:2}, we give another description of this representation space which turns out to be easier and more useful than the one given in \cite{AA}. 

Let $G$ be a finite group and let $V$ be a faithful $\C$-linear representation of $G$. 
Let $\A$ denote the ring of complex valued polynomial functions on $V$. 
Then $G$ acts on $\A$ by 
$$(g \cdot P)(v) = P(g^{-1} \cdot v), {\rm ~for~all} ~g \in G, P \in \A {\rm   ~and~} v \in V.$$
Let $\W$ denote the Weyl algebra consisting of differential operators on $\A$ with polynomial coefficients. 
Then $\W$ admits an action of $G$ determined by the following property:
$$(g \cdot D)(P) = g \cdot (D (g^{-1} \cdot P)), {\rm~for~all} ~g \in G, D \in \W {\rm ~and~} P \in \A .$$
By choosing a basis of $V$, we identify $\A$ with $\C[x_1, \dots, x_n]$ and the Weyl algebra $\W$ with $\C [x_1, \dots, x_n, \p_1, \dots, \p_n]$, where $\p_i = \frac{\p}{\p x_i}$. 
Note that the multiplication in $\W$ is non-commutative. 
Any element $D \in \W$ can be written in a unique way as a finite sum $D = \sum c_{\a, \b} x^{\a} \p^{\b}$ where $x^{\a} = x_1^{\a_1} \cdots x_n^{\a_n}, \p^{\b} = \p_1^{\b_1} \cdots \p_n^{\b_n}$ and $c_{\a, \b} \in \C$. 
For $D = \sum c_{\a, \b} x^{\a} \p^{\b} \in \W$, we define the degree of $D$ as
$$\deg(D) = \max \left\{ \sum (\a_i - \b_i): c_{\a, \b} \ne 0\right\} .$$
Note that the degree is invariant under $G$-action. 
Let $\z$ be the set of $G$-invariant elements of negative degree in $\W$. 
Finally, let $\n(V) \subseteq \A$ denote the space of polynomials annihilated by $\z$, i.e., 
$$\n(V) := \{P \in \A: D(P) = 0 {\rm ~for ~all~} D \in \z\} .$$
It is clearly $G$-stable. 
We define the $G$-representation $\n(V)$ to be the {\em polynomial model for $G$ associated to $V$}. 

We now begin our study of the polynomial model $\n(V)$ with the following basic observation.

\begin{lem}
There exists a $G$-equivariant injective map $:\C[G] \hookrightarrow \A$. 
\end{lem}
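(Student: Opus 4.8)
The plan is to realize the regular representation $\C[G]$ inside $\A$ by evaluating polynomials along a single free orbit of $G$ in $V$. First I would produce a point $v_0 \in V$ whose $G$-stabilizer is trivial. Since $V$ is faithful, for every $g \in G$ with $g \ne e$ the fixed subspace $V^g = \ker(g - \mathrm{id}_V)$ is a proper linear subspace of $V$; as $\C$ is infinite, a finite union $\bigcup_{g \ne e} V^g$ of proper subspaces cannot exhaust $V$, so a point $v_0$ lying outside this union exists. For such a $v_0$ the points $g v_0$ ($g \in G$) are pairwise distinct, i.e.\ the orbit $G \cdot v_0$ is free of cardinality $|G|$.

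Next I would choose a polynomial $P \in \A$ taking the value $1$ at $v_0$ and the value $0$ at every other point of the orbit $G \cdot v_0$. Because the orbit is a finite set of distinct points of $V$, such a $P$ exists by Lagrange interpolation, polynomial functions on $V$ separating points. I would then define the map $\phi \colon \C[G] \to \A$ on the standard basis of $\C[G]$ by $\phi(g) = g \cdot P$ and extend it linearly.

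It then remains to verify that $\phi$ is $G$-equivariant and injective. Equivariance is immediate from the definition of the $G$-action on $\A$: for $h, g \in G$ one has $\phi(h \cdot g) = \phi(hg) = (hg) \cdot P = h \cdot (g \cdot P) = h \cdot \phi(g)$. For injectivity, suppose $\phi\bigl(\sum_g a_g\, g\bigr) = \sum_g a_g\, (g \cdot P) = 0$. Evaluating at $h v_0$ and using $(g \cdot P)(h v_0) = P\bigl((g^{-1}h)\, v_0\bigr)$, which equals $1$ when $g = h$ and $0$ otherwise (here the freeness of the orbit is exactly what makes this well defined), one reads off $a_h = 0$ for every $h \in G$. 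Hence $\phi$ is the desired $G$-equivariant injection $\C[G] \hookrightarrow \A$.

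The only genuine point requiring care is the existence of the free orbit, namely a point with trivial stabilizer; everything else is formal bookkeeping. A slightly more conceptual variant avoids the explicit interpolation altogether: evaluation along the orbit defines a surjective $G$-equivariant map $\A \to \mathrm{Fun}(G \cdot v_0) \cong \C[G]$, and since we work over $\C$ this surjection splits by Maschke's theorem, yielding a $G$-equivariant section $\C[G] \hookrightarrow \A$. I expect to present the explicit construction, as it is self-contained and makes the embedding transparent, while remarking that this lemma already shows every irreducible representation of $G$ occurs in $\A$.
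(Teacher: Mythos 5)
Your proof is correct and follows essentially the same route as the paper: pick a point $v_0$ with trivial stabilizer, choose a polynomial $P$ separating the points of the free orbit $G\cdot v_0$, and send $g \mapsto g\cdot P$. You merely supply details the paper leaves implicit (the union-of-proper-subspaces argument for the existence of $v_0$ and the explicit evaluation check for injectivity), and your normalization $P(v_0)=1$, $P(gv_0)=0$ for $g\ne 1$ is the one that makes injectivity immediate.
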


\begin{proof}
Observe that, since $V$ is a faithful representation of $G$, there exists a vector $v \in V$ such that $g \cdot v \ne v$ for $1 \ne g \in G$. 
Further, there exists a polynomial $P \in \A$ such that $P(v) = 0$ and $P(g \cdot v) \ne 0$ for $g \ne 1$. 
We define the required map by sending $g$ to $g \cdot P$. 
\end{proof}

Let $\A_m$ denote the subspace of homogeneous polynomials in $\A$ of degree $m$. 
Since $G$ acts linearly on $\A$, each $\A_m$ is stable under the action of $G$.
It now follows from the above lemma that each irreducible representation of $G$ is contained in some $\A_m$. 

Further, let $\W_{p, q}$ denote the subspace of $\W$ generated by the elements $x^{\a} \p^{\b}$ where $|\a| := \sum_i \a_i = p$ and $|\b| := \sum_i \b_i = q$. 
Again, each $\W_{p, q}$ is stable under $G$-action. 
We recall that $\Hom_{\C} (\A_q, \A_p)$ admits an action of $G$ determined by $(g \cdot \phi) (P) = g \cdot (\phi (g^{-1} \cdot P))$ where $\phi \in \Hom_{\C} (\A_q, \A_p)$ and $P \in \A_q$. 
By letting $\W$ act on $\A$, we get a $G$-equivariant linear map $\Psi: \W \rightarrow \Hom_{\C} (\A, \A)$. 
From this map, we obtain a $G$-equivariant linear map $\Psi_{p, q}: \W_{p, q} \rightarrow \Hom_{\C} (\A_q, \A_p)$ for each $p$ and $q$. 

\begin{lem}
The $G$-equivariant map $\Psi_{p, q}: \W_{p, q} \rightarrow \Hom_{\C}(\A_q, \A_p)$ is an isomorphism. 
\end{lem}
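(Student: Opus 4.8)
The plan is to show that $\Psi_{p,q}$ is an isomorphism by combining a dimension count with a direct proof of injectivity. First I would record that the operators $x^{\a}\p^{\b}$ with $|\a| = p$ and $|\b| = q$ form a basis of $\W_{p,q}$, so that $\dim_{\C}\W_{p,q} = \binom{n+p-1}{p}\binom{n+q-1}{q}$, and that $\dim_{\C}\Hom_{\C}(\A_q,\A_p) = \dim_{\C}\A_q \cdot \dim_{\C}\A_p$ equals exactly the same product. Hence it suffices to prove that $\Psi_{p,q}$ is injective.

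For injectivity I would test an operator against monomials. The key computation is that for multi-indices $\b,\gamma$ of the same total degree $q$ one has $\p^{\b} x^{\gamma} = \b!\,\delta_{\b,\gamma}$, where $\b! = \b_1!\cdots\b_n!$: indeed $\p^{\b}x^{\gamma}$ vanishes unless $\b \le \gamma$ componentwise, and the constraint $|\b| = |\gamma|$ then forces $\b = \gamma$. Now suppose $D = \sum_{|\a|=p,\,|\b|=q} c_{\a,\b}\, x^{\a}\p^{\b}$ lies in the kernel of $\Psi_{p,q}$, that is, $D$ annihilates every homogeneous polynomial of degree $q$. Applying $D$ to $x^{\gamma}$ for a fixed $\gamma$ with $|\gamma|=q$ and using the computation above collapses the double sum to $D(x^{\gamma}) = \gamma!\sum_{|\a|=p} c_{\a,\gamma}\,x^{\a}$. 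Since the $x^{\a}$ with $|\a|=p$ are linearly independent in $\A_p$ and $\gamma! \ne 0$, the hypothesis $D(x^{\gamma})=0$ forces $c_{\a,\gamma}=0$ for every $\a$. Letting $\gamma$ range over all multi-indices of degree $q$ shows that all coefficients of $D$ vanish, so $D = 0$.

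Combining injectivity with the equality of dimensions yields that $\Psi_{p,q}$ is an isomorphism of $\C$-vector spaces; its $G$-equivariance has already been observed in the construction of the map. I do not anticipate a genuine obstacle, since the argument is essentially forced: the only point requiring care is the degree bookkeeping showing that $\p^{\b}$ can preserve degree only when paired with the matching monomial $x^{\gamma}$. In fact this observation gives a sharper picture, namely that the matrix of $\Psi_{p,q}$ in the monomial bases is diagonal up to the nonzero scalars $\gamma!$ (each basis operator $x^{\a}\p^{\b}$ maps to $\b!$ times the elementary homomorphism $E_{\a,\b}\colon x^{\gamma}\mapsto \delta_{\b,\gamma}\,x^{\a}$), which is what makes the invertibility transparent.
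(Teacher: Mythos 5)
Your proof is correct and is essentially the paper's argument: both rest on the computation that $x^{\a}\p^{\b}$ acts on $x^{\gamma}$ (with $|\gamma|=q=|\b|$) as $\b!\,\delta_{\b,\gamma}x^{\a}$, so that $\Psi_{p,q}$ carries the monomial basis of $\W_{p,q}$ to nonzero scalar multiples of the elementary basis of $\Hom_{\C}(\A_q,\A_p)$. The paper states this directly rather than packaging it as injectivity plus a dimension count, but the content is identical.
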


\begin{proof}
Let $(u_{\a, \b})$, for $|\a| = p$ and $|\b| = q$, be the basis of $\Hom_{\C}(\A_q, \A_p)$ defined by $u_{\a, \b}(x^{\gamma}) = 0$ for $\gamma \ne \b$ and $u_{\a, \b}(x^{\b}) = x^{\a}$. 
It can be easily seen that $\Psi_{p, q}$ maps $x^{\a} \p^{\b}$ to $(\prod_i \b_i!) u_{\a, \b}$. 
It then follows that the map $\Psi_{p, q}$ is an isomorphism. 
\end{proof}

\begin{cor}\label{cor}
The map $\Psi_{p, q}$ induces an isomorphism $\W_{p, q}^G \iso \Hom_{\C[G]}(\A_q, \A_p)$.
\end{cor}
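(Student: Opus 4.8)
The plan is to deduce this directly from the preceding Lemma, which asserts that $\Psi_{p,q}\colon \W_{p,q} \to \Hom_\C(\A_q,\A_p)$ is a $G$-equivariant isomorphism of $\C$-vector spaces. The only additional input needed is the general fact that a $G$-equivariant isomorphism of $G$-representations restricts to a bijection between the respective subspaces of $G$-fixed vectors. Indeed, if $w \in \W_{p,q}$ then $\Psi_{p,q}(g\cdot w) = g\cdot \Psi_{p,q}(w)$ for all $g\in G$, so $w$ is $G$-invariant if and only if $\Psi_{p,q}(w)$ is $G$-invariant; since $\Psi_{p,q}$ is a linear isomorphism, its restriction to $\W_{p,q}^G$ is therefore an isomorphism onto the subspace of $G$-fixed vectors of $\Hom_\C(\A_q,\A_p)$.

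It then remains to identify this subspace of $G$-fixed vectors with $\Hom_{\C[G]}(\A_q,\A_p)$. Here I would simply unwind the definition of the $G$-action on $\Hom_\C(\A_q,\A_p)$ recalled above, namely $(g\cdot \phi)(P) = g\cdot(\phi(g^{-1}\cdot P))$. A homomorphism $\phi$ is fixed by $G$ precisely when $g\cdot(\phi(g^{-1}\cdot P)) = \phi(P)$ for all $g \in G$ and all $P \in \A_q$, equivalently when $\phi(g^{-1}\cdot P) = g^{-1}\cdot\phi(P)$ for all such $g$ and $P$. Replacing $g^{-1}$ by $g$, this says exactly that $\phi$ commutes with the $G$-action, i.e. that $\phi$ is $\C[G]$-linear. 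Hence $\Hom_\C(\A_q,\A_p)^G = \Hom_{\C[G]}(\A_q,\A_p)$, and combining with the previous paragraph yields the claimed isomorphism $\W_{p,q}^G \iso \Hom_{\C[G]}(\A_q,\A_p)$.

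Since every step here is a formal manipulation, I do not anticipate a genuine obstacle; the only point requiring a moment's care is the bookkeeping of inverses in verifying that the $G$-fixed homomorphisms are precisely the equivariant ones, which is the content of the identification $\Hom_\C(\A_q,\A_p)^G = \Hom_{\C[G]}(\A_q,\A_p)$.
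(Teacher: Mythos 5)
Your proposal is correct and follows exactly the same route as the paper, which simply notes that $\bigl(\Hom_{\C}(\A_q, \A_p)\bigr)^G = \Hom_{\C[G]}(\A_q, \A_p)$ and lets the restriction of the equivariant isomorphism to $G$-invariants do the rest. You have merely written out the two formal verifications that the paper leaves implicit.
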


\begin{proof}
$\big(\Hom_{\C}(\A_q, \A_p)\big)^G = \Hom_{\C[G]}(\A_q, \A_p)$. 
\end{proof}

We now state and prove the main theorem of this section.

\begin{thm}\label{thm:2}
Let $V$ be a faithful representation of a finite group $G$. 
Let $\n(V)$ denote the polynomial model for $G$ associated to $V$.  
For an irreducible representation $U$ of $G$, let $p(U)$ be the smallest integer such that $U$ is isomorphic to a subspace of $\A_{p(U)}$ and let $C_U$ denote the $U$-isotypical component of $\A_{p(U)}$. 
Then 
$$\n(V) = \op C_U .$$ 
\end{thm}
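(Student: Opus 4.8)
The plan is to exploit the fact that the grading of $\W$ by $\deg$ is preserved by the $G$-action, so that $\z$ is spanned by homogeneous invariant operators. Since each $\W_{p,q}$ is $G$-stable and $\deg$ is $G$-invariant, decomposing an invariant operator into bihomogeneous parts shows $\z=\bigoplus_{p<q}\W_{p,q}^G$. The crucial observation is that a $G$-invariant operator $D$ commutes with the $G$-action on $\A$: the invariance $g\cdot D=D$ unwinds to $D(g\cdot P)=g\cdot D(P)$, so $D$ is a $G$-equivariant endomorphism of $\A$. In particular every $D\in\W_{p,q}^G$ restricts to a $G$-equivariant map $\A_m\to\A_{m+p-q}$, and hence carries the $U$-isotypical component $\A_m[U]$ of $\A_m$ into $\A_{m+p-q}[U]$, lowering degree by $q-p$ while preserving isotypical type. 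I would also first note that $\n(V)$ is graded: for homogeneous $D\in\W_{p,q}^G$ the images $D(P_m)\in\A_{m+p-q}$ sit in distinct degrees as $m$ varies, so $D(\sum_m P_m)=0$ forces $D(P_m)=0$ for each $m$. Writing $N_m:=\n(V)\cap\A_m$, the theorem reduces to the identity $N_m=\bigoplus_{U:\,p(U)=m}\A_m[U]$, since summing over $m$ then gives $\n(V)=\bigoplus_U C_U$.

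For the inclusion $\supseteq$, suppose $p(U)=m$ and take $P\in\A_m[U]$. For any homogeneous $D\in\W_{p,q}^G$ with $p<q$ we have $D(P)\in\A_{m+p-q}[U]$; but $m+p-q<m=p(U)$, and by minimality of $p(U)$ no copy of $U$ occurs in $\A_k$ for $k<p(U)$, so $\A_{m+p-q}[U]=0$ and $D(P)=0$. As $\z$ is spanned by such $D$, this gives $C_U=\A_m[U]\subseteq N_m$.

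For the reverse inclusion it suffices to show $N_m\cap\A_m[U]=0$ whenever $p(U)<m$ (the case $p(U)>m$ being vacuous since then $\A_m[U]=0$). Here I would use only the operators of a single degree, namely $\W_{p(U),m}^G$. By Corollary \ref{cor} and the definition of $\Psi_{p,m}$ as restriction to $\A_m$, one has $D(P)=\Psi_{p(U),m}(D)(P)$ for $P\in\A_m$, and $\Psi_{p(U),m}$ identifies $\W_{p(U),m}^G$ with the full space $\Hom_{\C[G]}(\A_m,\A_{p(U)})$. Restricting to isotypical components, these realize every $G$-equivariant map $\A_m[U]\to\A_{p(U)}[U]$. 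Writing $\A_m[U]\cong U^{\oplus a}$ and $\A_{p(U)}[U]\cong U^{\oplus b}$ with $a,b\ge 1$ (both nonzero because $U$ occurs in $\A_m$ and, by definition, in $\A_{p(U)}$), Schur's lemma identifies this Hom-space with all $b\times a$ matrices acting on $U^{\oplus a}$, whose common kernel is zero. Thus no nonzero element of $\A_m[U]$ is annihilated by all of $\z$, giving $N_m\cap\A_m[U]=0$.

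Combining the two inclusions with the vacuous case yields $N_m=\bigoplus_{U:\,p(U)=m}\A_m[U]$ for every $m$, and summing over $m$ proves $\n(V)=\bigoplus_U C_U$. The only real work is the reverse inclusion: its content is the reduction, via Corollary \ref{cor}, of the annihilation condition to a statement about $G$-equivariant maps, followed by the Schur-lemma computation showing that the single family $\Hom_{\C[G]}(\A_m,\A_{p(U)})$ already has trivial common kernel on $\A_m[U]$. The easy inclusion and the grading reductions are formal.
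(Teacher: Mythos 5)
Your proof is correct and follows essentially the same route as the paper: the easy inclusion via degree-lowering of $G$-invariant operators combined with minimality of $p(U)$, and the reverse inclusion via Corollary \ref{cor} identifying $\W_{p,q}^G$ with $\Hom_{\C[G]}(\A_q,\A_p)$. You supply somewhat more detail than the paper does (the gradedness of $\n(V)$ and the Schur-lemma computation showing the common kernel on $\A_m[U]$ is trivial, where the paper merely exhibits one operator not annihilating a given copy of $U$), but the key mechanism is identical.
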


\begin{proof}
Any $D \in \z$ maps $\A_{p(U)}$ to $\op_{p < p(U)} \A_p$, hence maps $C_U$ to $0$. 
Thus, $C_U$ is contained in $\n(V)$. 
Further, if some $\A_q$, for $q > p(U)$, has a subspace $U'$ isomorphic to $U$, then there exists a non-zero $G$-equivariant map from $\A_q$ to $\A_{p(U)}$. 
This, by Corollary \ref{cor}, comes from an element $D \in \W_{p(U), q}^G$ such that $D(U') \ne 0$. 
Since $q > p(U)$, we have $\deg(D) < 0$ and hence $D \in \z$. 
This proves that the only homogeneous component of $\n(V)$ which has a subspace isomorphic to $U$ is of degree $p(U)$. 
\end{proof}

\begin{cor}\label{cor:poly:Gelfand}
With the above notations, the polynomial model $\n(V)$ is a Gelfand model for $G$ if and only if each irreducible representation $U$ of $G$ appears with multiplicity one in its first occurrence in the homogeneous components of $\A$. 
\end{cor}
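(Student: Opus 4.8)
The plan is to read the multiplicities of the irreducible representations directly off the decomposition $\n(V) = \op_U C_U$ furnished by Theorem \ref{thm:2}. Recall that being a Gelfand model has two components: every irreducible representation of $G$ must occur in $\n(V)$, and each must occur with multiplicity exactly one. I would first dispose of the existence half. By the first Lemma, the regular representation $\C[G]$ embeds $G$-equivariantly in $\A$, so every irreducible $U$ occurs in some $\A_m$; hence the integer $p(U)$ is well defined and finite, and the $U$-isotypical component $C_U$ of $\A_{p(U)}$ is nonzero. Thus each irreducible $U$ already appears in $\n(V)$ at least once, with no hypothesis needed.

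Next I would compute the exact multiplicity of a fixed irreducible $U$ in $\n(V)$. Using $\n(V) = \op_{U'} C_{U'}$, this multiplicity is the sum over all irreducibles $U'$ of the multiplicity of $U$ in $C_{U'}$. But $C_{U'}$ is by construction $U'$-isotypical, so by Schur's lemma it contains a copy of $U$ only when $U' \cong U$. Consequently the multiplicity of $U$ in $\n(V)$ equals the multiplicity of $U$ in $C_U$, which in turn equals the multiplicity of $U$ in $\A_{p(U)}$, i.e.\ in the first homogeneous component of $\A$ in which $U$ occurs.

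From here the equivalence is immediate. Since every irreducible already occurs at least once, $\n(V)$ fails to be a Gelfand model precisely when some $U$ occurs with multiplicity at least two, which by the previous paragraph happens precisely when $U$ occurs with multiplicity at least two in its first occurrence $\A_{p(U)}$. Equivalently, $\n(V)$ is a Gelfand model if and only if each irreducible $U$ appears with multiplicity one in its first occurrence, as claimed.

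I do not anticipate a serious obstacle here, since the statement is essentially a reformulation of Theorem \ref{thm:2} in the language of multiplicities. The one point that genuinely requires the earlier material, rather than being purely formal, is the existence half: one must invoke the first Lemma to guarantee that $p(U)$ is finite for every $U$, so that the multiplicity-one criterion is equivalent to the Gelfand condition and not merely to an upper bound on multiplicities. The remaining step, that distinct isotypical components contribute no overlapping copies, is an immediate consequence of Schur's lemma and needs only to be stated.
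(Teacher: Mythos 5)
Your argument is correct and is exactly the reasoning the paper intends: the corollary is stated without proof as an immediate consequence of Theorem \ref{thm:2}, since the decomposition $\n(V) = \op C_U$ shows the multiplicity of $U$ in $\n(V)$ equals its multiplicity in $\A_{p(U)}$, and the earlier embedding of $\C[G]$ into $\A$ guarantees $p(U)$ is defined. Your write-up simply makes explicit the Schur's lemma step and the existence half that the paper leaves implicit.
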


We complete this section with an easy observation and its two consequences. 

\begin{lem}\label{lem:product}
Let $G_1, G_2$ be finite groups with faithful representations $V_i$ for $i = 1, 2$. 
Let $G = G_1 \times G_2$ and let $W = V_1 \oplus V_2$. 
Then the polynomial model $\n(W)$ for $G$ associated to the faithful representation $W$ is isomorphic to $\n(V_1) \otimes \n(V_2)$ as a $G$-module. 
\end{lem}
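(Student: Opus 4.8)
The plan is to deduce the result from the explicit description of the polynomial model furnished by Theorem \ref{thm:2}, rather than wrestling directly with the algebra $\z$ of invariant operators of negative degree for $G$ (which is awkward, since it need not be generated by the corresponding algebras for $G_1$ and $G_2$). Write $\A_1$ and $\A_2$ for the rings of polynomial functions on $V_1$ and $V_2$, so that the ring of polynomial functions on $W = V_1 \op V_2$ is $\A = \A_1 \ot \A_2$, graded by total degree: $\A_N = \op_{p+q=N}(\A_1)_p \ot (\A_2)_q$ as $G$-modules. By Theorem \ref{thm:2} it then suffices to match the two sides $\op_U C_U$ and $\n(V_1) \ot \n(V_2)$ isotypical component by isotypical component.

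First I would recall that the irreducible representations of $G = G_1 \times G_2$ are exactly the tensor products $U = U_1 \ot U_2$ with $U_i$ irreducible for $G_i$. The central computation is the degree of first occurrence. Since the $(U_1 \ot U_2)$-isotypical component of $(\A_1)_p \ot (\A_2)_q$ is the tensor product of the $U_1$-isotypical component of $(\A_1)_p$ with the $U_2$-isotypical component of $(\A_2)_q$, the representation $U_1 \ot U_2$ embeds in $(\A_1)_p \ot (\A_2)_q$ precisely when $U_1 \into (\A_1)_p$ and $U_2 \into (\A_2)_q$. Minimizing $p+q$ subject to $p \ge p(U_1)$ and $q \ge p(U_2)$ then yields $p(U_1 \ot U_2) = p(U_1) + p(U_2)$, and — the key point — the minimum is attained at the single bidegree $(p(U_1), p(U_2))$.

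Consequently, in the degree $p(U)$ component $\A_{p(U)} = \op_{p+q = p(U)}(\A_1)_p \ot (\A_2)_q$, every summand except the one with $p = p(U_1)$ and $q = p(U_2)$ contains no copy of $U$, so that $C_U = C_{U_1} \ot C_{U_2}$, where $C_{U_i}$ is the $U_i$-isotypical component of $(\A_i)_{p(U_i)}$, i.e.\ the $U_i$-summand of $\n(V_i)$ in the notation of Theorem \ref{thm:2}. Summing over all pairs $(U_1, U_2)$ and using distributivity of $\ot$ over $\op$,
$$\n(W) = \op_{U_1, U_2} C_{U_1} \ot C_{U_2} = \Big(\op_{U_1} C_{U_1}\Big) \ot \Big(\op_{U_2} C_{U_2}\Big) = \n(V_1) \ot \n(V_2),$$
an identification that is manifestly $G$-equivariant.

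The only genuinely substantive ingredients are the two facts invoked above: that the isotypical component of a tensor product of a $G_1$-module with a $G_2$-module factors as a tensor product of isotypical components (a standard consequence of the classification of irreducibles of a direct product), and the uniqueness of the minimizing bidegree, which guarantees that $C_U$ receives no spurious contributions from other bidegrees of the same total degree. I expect this uniqueness to be the step most worth stating carefully, although it is immediate once the additivity $p(U_1 \ot U_2) = p(U_1) + p(U_2)$ is established; the remainder is bookkeeping with the grading.
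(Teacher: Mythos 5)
Your proposal is correct and follows essentially the same route as the paper's proof: decompose $\A(W)_m$ as $\op_{p+q=m}\A(V_1)_p\ot\A(V_2)_q$, establish $p(U_1\ot U_2)=p(U_1)+p(U_2)$ and $C_U=C_{U_1}\ot C_{U_2}$, and conclude via Theorem \ref{thm:2}. The only difference is that you spell out the justification (factorization of isotypical components and uniqueness of the minimizing bidegree) that the paper states without proof.
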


\begin{proof}
Let $\A(W), \A(V_1)$ and $\A(V_2)$ denote the rings of complex valued polynomial functions on the vector spaces $W, V_1$ and $V_2$ respectively.
It is then clear that $\A(W) = \A(V_1) \otimes \A(V_2)$ and $\A(W)_m = \sum_{i + j = m} \A(V_1)_i \otimes \A(V_2)_j$. 
Any irreducible representation of $G$ is of the form $U = U_1 \otimes U_2$, where $U_i$ are irreducible representations of $G_i$, for $i = 1, 2$.
We then have $p(U) = p(U_1) + p(U_2)$ and $C_U = C_{U_1} \otimes C_{U_2}$. 
The lemma now follows from Theorem \ref{thm:2}. 
\end{proof}

\begin{cor}
With notations as in the above lemma, $\n(W)$ is a Gelfand model for $G$ if and only if $\n(V_i)$ is a Gelfand model for $G_i$ for $i = 1, 2$. 
\end{cor}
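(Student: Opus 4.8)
The plan is to deduce this corollary directly from Lemma \ref{lem:product} together with the multiplicity criterion of Corollary \ref{cor:poly:Gelfand}, by tracking how multiplicities behave under the tensor product. First I would recall that the irreducible representations of $G = G_1 \times G_2$ are exactly the external tensor products $U = U_1 \otimes U_2$, where $U_i$ runs over the irreducible representations of $G_i$. Writing $m_i(U_i)$ for the multiplicity of $U_i$ in $\n(V_i)$, the isomorphism $\n(W) \cong \n(V_1) \otimes \n(V_2)$ of Lemma \ref{lem:product} shows that the multiplicity of $U_1 \otimes U_2$ in $\n(W)$ equals the product $m_1(U_1)\, m_2(U_2)$.

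By Corollary \ref{cor:poly:Gelfand} (or directly from Theorem \ref{thm:2}), each model $\n(V_i)$ is a Gelfand model for $G_i$ precisely when $m_i(U_i) = 1$ for every irreducible $U_i$, and likewise $\n(W)$ is a Gelfand model for $G$ precisely when $m_1(U_1)\, m_2(U_2) = 1$ for every pair $(U_1, U_2)$. The reverse implication is then immediate: if $m_1(U_1) = m_2(U_2) = 1$ for all $U_1, U_2$, then every such product equals $1$, so $\n(W)$ is a Gelfand model.

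For the forward implication I would use that these multiplicities are nonnegative integers and that, by the observation following Theorem \ref{thm:2}, every irreducible representation occurs in a polynomial model with multiplicity at least one. Fixing an arbitrary irreducible $U_1$ of $G_1$ and choosing any irreducible $U_2$ of $G_2$ (for instance the trivial one), the equation $m_1(U_1)\, m_2(U_2) = 1$ in nonnegative integers forces both factors to equal $1$; since $U_1$ was arbitrary this gives $m_1(U_1) = 1$ for all $U_1$, and by symmetry $m_2(U_2) = 1$ for all $U_2$. Hence both $\n(V_1)$ and $\n(V_2)$ are Gelfand models. The only point requiring any care is this last step — that a product of the relevant multiplicities equals $1$ only when each factor does — which rests on integrality and on the nonvanishing of every isotypical component $C_{U_i}$ guaranteed by Theorem \ref{thm:2}; there is no genuine obstacle here, the whole content being the multiplicativity recorded in Lemma \ref{lem:product}.
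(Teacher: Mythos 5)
Your proof is correct and matches the paper's intended route: the paper states this corollary without proof, treating it as immediate from Lemma \ref{lem:product}, and your argument --- multiplicativity of multiplicities under $\n(V_1) \otimes \n(V_2)$, combined with the fact from Theorem \ref{thm:2} that every irreducible occurs (so a product of positive integer multiplicities equals $1$ only when each factor does) --- is exactly the reasoning being left implicit. No gaps; the care you take with the forward implication is the only nontrivial point and you handle it correctly.
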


\begin{cor}\label{cor:product}
If $G_i$, for $1 \leq i \leq n$, are finite groups with corresponding faithful representations $V_i$. 
Let $G = \Pi_i G_i$ and $W = \oplus_i V_i$. 
Then the polynomial model $\n(W)$ is a Gelfand model for $G$ if and only if $\n(V_i)$ is a Gelfand model for $G_i$ for every $1 \leq i \leq n$. 
\end{cor}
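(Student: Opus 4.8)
The plan is to reduce the statement to the two-factor case established in the corollary immediately preceding it, and then to proceed by induction on $n$. The case $n = 1$ is trivial, and the case $n = 2$ is precisely that two-factor corollary, so I assume $n \geq 3$ and that the result holds for products of fewer than $n$ groups. The whole argument rests on Lemma \ref{lem:product}; no genuinely new idea is needed beyond careful bookkeeping.

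For the inductive step, set $H = G_2 \times \cdots \times G_n$ and $U = V_2 \op \cdots \op V_n$. Since each $V_i$ is faithful for $G_i$, the direct sum $U$ is a faithful representation of $H$: an element $(g_2, \dots, g_n)$ acting trivially on $U$ must act trivially on each summand $V_i$, forcing $g_i = 1$ for every $i$. Hence the polynomial model $\n(U)$ is defined. Writing $G = G_1 \times H$ and $W = V_1 \op U$, the two-factor corollary applied to the pair $(G_1, V_1)$ and $(H, U)$ shows that $\n(W)$ is a Gelfand model for $G$ if and only if both $\n(V_1)$ is a Gelfand model for $G_1$ and $\n(U)$ is a Gelfand model for $H$. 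By the induction hypothesis applied to $H = G_2 \times \cdots \times G_n$, the latter holds if and only if $\n(V_i)$ is a Gelfand model for $G_i$ for each $2 \leq i \leq n$. Combining the two equivalences yields the desired conclusion.

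Alternatively, one may iterate Lemma \ref{lem:product} directly to obtain a $G$-module isomorphism $\n(W) \iso \n(V_1) \ot \cdots \ot \n(V_n)$, and then argue at the level of multiplicities. Any irreducible representation of $G$ has the form $U_1 \ot \cdots \ot U_n$ with each $U_i$ irreducible for $G_i$, and its multiplicity in the tensor product is the product $\prod_i m_i(U_i)$ of the multiplicities $m_i(U_i)$ of $U_i$ in $\n(V_i)$. Since a product of nonnegative integers equals one precisely when every factor equals one, this multiplicity is one for every choice of the $U_i$ if and only if $m_i(U_i) = 1$ for every index $i$ and every irreducible $U_i$ of $G_i$, which is exactly the condition that each $\n(V_i)$ be a Gelfand model. (By Theorem \ref{thm:2} each $m_i(U_i)$ is in any case at least one, so the product is at least one and equals one iff all factors do.)

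There is no serious obstacle here: the argument is a routine induction resting on Lemma \ref{lem:product}. The only points requiring care are, first, the verification that the regrouped representation $U$ remains faithful, so that its polynomial model is defined and the two-factor corollary genuinely applies, and second, in the alternative approach, the observation that iterating Lemma \ref{lem:product} is legitimate because the underlying identifications $\A(W) = \A(V_1) \ot \A(U)$ are associative with respect to regrouping the factors. Both of these follow at once from the setup, so the proof is short.
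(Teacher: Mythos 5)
Your proof is correct and matches the paper's (implicit) argument: the paper states this corollary without proof as an immediate consequence of Lemma \ref{lem:product} and the two-factor corollary, which is exactly your induction. Your care about faithfulness of $V_2 \oplus \cdots \oplus V_n$ and your alternative multiplicity argument via the tensor decomposition $\n(W) \cong \n(V_1) \otimes \cdots \otimes \n(V_n)$ are both sound, if more detailed than the paper bothers to be.
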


\section{Finite Coxeter groups and fake degrees}
In this section, we recall some basic facts about finite Coxeter groups and their representations. 
More precisely, we recall the notion of the fake degree of an irreducible representation of a finite Coxeter group. 
The basic results about Coxeter groups are recalled from \cite{B} and those about the fake degree are recalled from \cite{C}. 
Towards the end of the section, we describe the fake degrees of irreducible representations of some finite Coxeter groups. 

A {\em Coxeter system} is a pair $(G, S)$ where $G$ is a group generated by a finite set $S$ with relations $s^2 = (ss')^{m(s, s')} = 1$ for $s, s' \in S$. 
If $(G, S)$ is a Coxeter system, by abuse of notation, we call $G$ a {\em Coxeter group}. 
In this paper, we are concerned with {\em finite Coxeter groups}. 
Let $G$ be a finite Coxeter group and let $V$ denote the $\C$-vector space generated by the basis $(e_s)_{s \in S}$.
Then $G$ admits a natural action on $V$ determined by the following property:
$$s (e_{s'}) = e_{s'} + 2 \cos \left(\frac{\pi}{m(s, s')}\right) e_s .$$
This is a canonical faithful representation of $G$ associated to the generating set $S$. 
In the rest of this paper, we shall not mention the generating set $S$ and say that $\P:G \hookrightarrow \GL(V)$ is the canonical faithful representation of the Coxeter group $G$. 

As described in the previous section, $G$ acts on the ring $\A$ of complex valued polynomial functions on $V$. 
Let $\A^G$ denote the subalgebra of $G$-invariant polynomials in $\A$. 
It is known that $\A^G$ is a polynomial algebra over $\C$ and, moreover, one has that $\A^G = \C[f_1, \dots, f_n]$ where the polynomials $f_i$ can be chosen to be homogeneous. 
The generating set $\{f_1, \dots, f_n\}$ is not unique, however, the degrees $d_i = \deg f_i$ are uniquely determined by the group $G$ and its representation $V$ (\cite[2.4.1]{C}). 

The quotient ring of $\A$ modulo the ideal generated by non-constant $G$-invariants, $\bar{\A} = \A/(f_i)$, admits a natural $G$-action. 
We decompose $\bar{\A}$ into ($G$-stable) homogeneous components as $\bar{\A} = \op_{i \geq 0} \bar{\A}_i$. 
It is known that $\bar{\A}$ is isomorphic to the regular representation of $G$ as a $G$-representation (\cite[2.4.6]{C}). 
For an irreducible representation $U$ of $G$ of degree $d$, let $a_1 \leq \dots \leq a_d$ denote the degrees of homogeneous components of $\bar{\A}$, counted with multiplicities, that contain a subrepresentation isomorphic to $U$. 

The polynomial $f_U(t) := \sum_{i = 1}^d t^{a_i}$ is called the {\em fake degree} of the representation $U$. 

If $\chi$ denotes the character of the representation $U$, then the fake degree of $U$ can be computed using the formula (\cite[11.1.1]{C})
\begin{eqnarray}\label{eqn:fake}
f_U(t) & = & |G|^{-1} \prod_{i = 1}^n (1 - t^{d_i}) \sum_{g \in G} \frac{\chi(g)}{\det(1 - \P(g) t)} ,
\end{eqnarray}
where $d_i$ are the degrees of the homogeneous generators $f_i$ of $\A^G$, as discussed above. 

\begin{rem}\label{rem:mult}
{\em If we write $f_U(t) = t^{q(U)} \cdot g_U(t)$, where $g_U(t)$ is a polynomial with non-zero constant term, then $q(U)$ is equal to $p(U)$, the smallest degree of the homogeneous component of $\A$ which has a subrepresentation isomorphic to $U$. 
Further, the constant term of $g_U(t)$ is equal to the multiplicity of the representation $U$ in $\A_{p(U)}$.}
\end{rem}

The finite Coxeter groups are classified by the corresponding Coxeter graphs (\cite[IV.1.9]{B}). 
A finite Coxeter group is said to be {\em irreducible} if its Coxeter graph is connected, and a general finite Coxeter group is a (finite) direct product of irreducible ones.
The irreducible finite Coxeter groups have been classified and, up to isomorphism, they are of the following types:
\begin{enumerate}
\item Classical Weyl groups: $W(A_n), W(B_{n + 1})$ and $W(D_{n + 3})$ for $n \geq 1$;
\item Exceptional Weyl groups: $W(G_2), W(F_4), W(E_6), W(E_7)$ and $W(E_8)$;
\item Non-crystallograhic groups: $H_3, H_4$ and the dihedral groups $G_2(n)$ of order $2n$ for $n = 5$ and $n \geq 7$. 
\end{enumerate}
We shall use these notations for the rest of this paper. 

The fake degrees of irreducible representations have been computed for various finite Coxeter groups. 
Steinberg calculated them for the groups $W(A_n)$ in \cite{St} whereas Lusztig and others computed them for many of the remaining finite Coxeter groups (\cite{L, BL, AL}). 
We now reproduce the formulae for the fake degrees of the irreducible representations of classical irreducible Weyl groups from (\cite[$\S$2]{L}). 
In the end, we also compute the fake degrees for the groups $H_3$ and $G_2(n)$. 

\subsection*{Fake degrees for $W(A_n)$} It is known that irreducible representations of $W(A_n) \cong S_{n + 1}$ are in 1-1 correspondence with partitions of $n + 1$, $\a:\a_1 \geq \a_2 \geq \cdots \geq \a_m \geq 0$ with $|\a| = \sum \a_i = n + 1$. 
For a partition $\a$, we define $\l_i = \a_i - i + m$. 
Then the fake degree of the corresponding representation $U_{\a}$ is given by 
\begin{equation}\label{eqn:A_n}
f_{U_{\a}}(t) = \frac{\prod_{i = 1}^n (t^i - 1)} {t^{\binom{m - 1}{2} + \binom{m - 2}{2} + \cdots + 1}} \cdot \frac{\prod_{\l_i > \l_j} (t^{\l_i} - t^{\l_j})}{\prod_{\l_j} \prod_{i = 1}^{\l_j} (t^i - 1)} .
\end{equation}

\subsection*{Fake degrees for $W(B_n)$} The irreducible representations of $W(B_n)$ are in 1-1 correspondence with ordered pairs $(U_{\a}, U_{\b})$ of irreducible representations of $S_k$ and $S_l$ where $k + l = n$ (see \cite[$\S$2.3]{L} for the details of this correspondence). 
For the partitions $\a:\a_1 \geq \cdots \geq \a_{m'} \geq 0$ and $\b:\b_1 \geq \cdots \geq \b_{m''} \geq 0$, we define $\l_i = \a_i - i + m'$ $(1 \leq i \leq m')$ and $\m_i = \b_i - i + m''$ $(1 \leq i \leq m'')$. 
Then the fake degree of the representation $U_{\a, \b}$, corresponding to the ordered pair $(U_{\a}, U_{\b})$, is given by 
\begin{equation}\label{eqn:B_n}
f_{U_{\a, \b}}(t) = \frac{t^{|\b|} \prod_{i = 1}^n (t^{2i} - 1)} {t^{2 \binom{m' - 1}{2} + \cdots + 2} \cdot t^{2 \binom{m'' - 1}{2} + \cdots + 2}} \cdot \frac{\prod_{\l_i > \l_j} (t^{2 \l_i} - t^{2 \l_j})}{\prod_{\l_j} \prod_{i = 1}^{\l_j} (t^{2i} - 1)} \cdot \frac{\prod_{\m_i > \m_j} (t^{2 \m_i} - t^{2 \m_j})}{\prod_{\m_j} \prod_{i = 1}^{\m_j} (t^{2i} - 1)} .
\end{equation}

\subsection*{Fake degrees for $W(D_n)$} The Weyl group $W(D_n)$ is a subgroup of index $2$ in $W(B_n)$. 
It is known that the restriction $\widetilde{U}_{\a, \b} := U_{\a, \b}|_{W(D_n)}$, for an irreducible representation $U_{\a, \b}$ of $W(B_n)$, remains irreducible if $\a \ne \b$ and that $\widetilde{U}_{\a, \b} \cong \widetilde{U}_{\b, \a}$. 
Further, $\widetilde{U}_{\a, \a}$ splits into two distinct irreducible representations $\widetilde{U}_{\a, \a}'$ and $\widetilde{U}_{\a, \a}''$. 
All irreducible representations of $W(D_n)$ are obtained in this way (\cite[11.4.4]{C}). 
The fake degrees of the irreducible representations of $W(D_n)$ are as follows
\begin{equation}\label{eqn:D_n:1}
f_{\widetilde{U}_{\a, \a}'}(t) = f_{\widetilde{U}_{\a, \a}''}(t) = \frac{(t^n - 1)f_{U_{\a, \a}}(t)}{t^{2n} - 1}
\end{equation}
and for $\a \ne \b$
\begin{equation}\label{eqn:D_n:2}
f_{\widetilde{U}_{\a, \b}}(t) = \frac{(t^n - 1)(f_{U_{\a, \b}}(t) + f_{U_{\a, \b}}(t))}{t^{2n} - 1} .
\end{equation}

\begin{lem}\label{lem:classical}
For an irreducible representation $U$ of a finite Coxeter group $G$, we decompose the fake degree of $U$ as $f_U(t) = t^{p(U)} \cdot g_U(t)$ such that the constant term of $g_U(t)$ is non-zero $($as in Remark $\ref{rem:mult}$$)$. 

If $G$ is $W(A_n), W(B_n)$ or $W(D_{2n + 1})$ then the constant term of the polynomial $g_U(t)$ is equal to $1$ for all irreducible representations $U$ of $G$. 
Further, there exist irreducible representations $U$ of $W(D_{2n})$ such that the constant term of $g_U(t)$ is $2$. 
\end{lem}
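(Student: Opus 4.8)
The plan is to read off the constant term of $g_U(t)$ directly from the explicit fake-degree formulas. By Remark \ref{rem:mult} this constant term equals the multiplicity of $U$ in $\A_{p(U)}$, so in particular it is a non-negative integer; it is also the coefficient of the lowest-degree monomial occurring in $f_U(t)$. Since every binomial factor $t^a - t^b$ (or $t^a - 1$) appearing in \eqref{eqn:A_n} and \eqref{eqn:B_n} has a single monomial of smallest degree, the lowest-degree term of a product of such factors is obtained by multiplying these smallest monomials, with no cancellation. Thus for each type it suffices to track the smallest monomial contributed by every factor of the numerator and denominator and to combine them.

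For $G = W(A_n)$, the $\l_i = \a_i - i + m$ are strictly decreasing, so in the second numerator of \eqref{eqn:A_n} each factor $t^{\l_i} - t^{\l_j}$ with $\l_i > \l_j$ contributes lowest monomial $-t^{\l_j}$, while every factor $t^i - 1$ in the two remaining products contributes lowest monomial $-1$, and the pure power $t^{\binom{m-1}{2}+\cdots+1}$ merely shifts the exponent. Multiplying these, the coefficient of the lowest-degree term of $f_{U_\a}(t)$ is a sign $\pm 1$. Since $f_{U_\a}(t)$ has non-negative integer coefficients (it is a sum of powers of $t$ by definition), this coefficient must be $+1$, so the constant term of $g_{U_\a}(t)$ equals $1$. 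The argument for $G = W(B_n)$ is identical, using \eqref{eqn:B_n}: the extra prefactor $t^{|\b|}$ and the passage from $t$ to $t^2$ only shift exponents, each binomial factor again contributes coefficient $\pm 1$, and non-negativity forces the lowest coefficient to be $+1$.

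For $G = W(D_n)$ I would exploit the identity $\dfrac{t^n - 1}{t^{2n} - 1} = \dfrac{1}{t^n + 1}$, whose power-series expansion has constant term $1$; hence multiplying a polynomial by this factor leaves its lowest-degree coefficient unchanged. For a split representation (the case $\a = \b$, which forces $n = 2|\a|$ to be even), formula \eqref{eqn:D_n:1} then gives that the constant term of $g_{\widetilde{U}_{\a,\a}'}(t)$ equals that of $g_{U_{\a,\a}}(t)$, namely $1$ by the $W(B_n)$ case. For $\a \ne \b$, the symmetry of \eqref{eqn:B_n} under interchanging $\a$ and $\b$ shows that the two fake degrees differ only by the prefactor $t^{|\b|}$ versus $t^{|\a|}$, so $f_{U_{\a,\b}}(t) = t^{|\b| - |\a|} f_{U_{\b,\a}}(t)$ and hence, taking $|\a| \le |\b|$, one has $f_{U_{\a,\b}}(t) + f_{U_{\b,\a}}(t) = (1 + t^{|\b| - |\a|}) f_{U_{\b,\a}}(t)$. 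By \eqref{eqn:D_n:2}, the lowest-degree coefficient of $f_{\widetilde{U}_{\a,\b}}(t)$ is therefore $1$ if $|\a| \ne |\b|$ and $2$ if $|\a| = |\b|$.

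The crux is then a parity observation. Since $|\a| + |\b| = n$, the equality $|\a| = |\b|$ is possible precisely when $n$ is even. Thus for $G = W(D_{2n+1})$ no split representations occur and one always has $|\a| \ne |\b|$, so every constant term equals $1$; together with the $W(A_n)$ and $W(B_n)$ cases this proves the first assertion. For $G = W(D_{2n})$ it suffices to exhibit one pair with $\a \ne \b$ and $|\a| = |\b| = n$ (two distinct partitions of $n$, which exist for all relevant values); any such pair yields a representation $\widetilde{U}_{\a,\b}$ whose $g_U(t)$ has constant term $2$, proving the second assertion. The only genuinely delicate point is verifying the symmetry $f_{U_{\a,\b}}(t) = t^{|\b|-|\a|} f_{U_{\b,\a}}(t)$ from \eqref{eqn:B_n}, i.e. checking that every factor of \eqref{eqn:B_n} other than $t^{|\b|}$ is invariant under $\a \leftrightarrow \b$; this is where I would concentrate the care, the remaining steps being routine monomial bookkeeping.
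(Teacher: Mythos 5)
Your proposal is correct and follows essentially the same route as the paper: reading the lowest-degree coefficient off the explicit formulas (\ref{eqn:A_n}) and (\ref{eqn:B_n}), and using the observation that $p(U_{\a,\b}) = p(U_{\b,\a})$ forces $|\a| = |\b|$, hence $n$ even, to separate $W(D_{2n+1})$ from $W(D_{2n})$. You supply more detail than the paper does (the sign/non-negativity argument for the lowest coefficient and the explicit symmetry $f_{U_{\a,\b}}(t) = t^{|\b|-|\a|} f_{U_{\b,\a}}(t)$), but the underlying argument is the same.
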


\begin{proof}
We use the notations: $U_{\a}, U_{\a, \b}$ and $\widetilde{U}_{\a, \b}$, as above, for the irreducible representations of the groups $W(A_n), W(B_n)$ and $W(D_n)$ respectively. 

It is clear from the formulas (\ref{eqn:A_n}) and (\ref{eqn:B_n}) for the fake degrees $f_{U_{\a}}(t)$ and $f_{U_{\a, \b}}(t)$, given above, that for any irreducible representation $U$ of $W(A_n)$ or $W(B_n)$, the constant term of $g_U(t)$ is $1$. 

Further, for the group $W(B_n)$, we observe from (\ref{eqn:B_n}) that $p(U_{\a, \b}) = p(U_{\b, \a})$ implies that $|\a| = |\b|$ and hence $n = 2 |\alpha|$. 
Therefore, the irreducible representations $U_{\a, \b}$ and $U_{\b, \a}$ of $W(B_{2n + 1})$ have the property that $p(U_{\a, \b}) \ne p(U_{\b, \a})$. 
As a result, we get that the constant term of the polynomial $g_U(t)$ for an irreducible representation $U$ of $W(D_{2n + 1})$ is always $1$. 

Finally, we note that the constant term of the polynomial $g_{\widetilde{U}_{\a, \b}}(t)$, with $|\a| = |\b|$ and $\a \ne \b$, is $2$. 
\end{proof}

\subsection*{Fake degrees for $H_3$.} The fake degrees of the irreducible representations of the group $H_3$ can be easily computed using the formula \ref{eqn:fake}. 
The group $H_3$ is isomorphic to the direct product of the alternating group $A_5$ with $\Z/2\Z = \{1, -1\}$. 
The group $A_5$ has five irreducible representations, which we denote by $U_1 (=$trivial), $V_4$, $W_5$, $Y_3$ and $Z_3$, where the subscripts indicate the degrees of the representations. 
Each irreducible representation of $A_5$ gives rise to $2$ irreducible representations of $H_3$ depending on whether the action of $\Z/2\Z$ is trivial or not. 
Thus, we get $10$ irreducible representations of $H_3$. 
We denote the irreducible representations in the same way as for $A_5$ if the action of $\Z/2\Z$ is trivial and the ones with the nontrivial action of $\Z/2\Z$ are denoted by $U_1', V_4', W_5', Y_3'$ and $Z_3'$. 
It is clear that any of the $Y_3'$ and $Z_3'$ can be taken as a defining representations of $H_3$. 
We choose $Y_3'$. 
The degrees of $H_3$ are $2, 6$ and $10$. 
Then the fake degrees of the irreducible representations of $H_3$ are as follows:
\begin{center}
\begin{tabular}{clcl}
$U_1$: & $1$, & $U_1'$: & $t^{15}$, \\
$V_4$: & $t^4 + t^6 + t^8 + t^{12}$, & $V_4'$: & $t^3 + t^7 + t^9 + t^{11}$, \\
$W_5$: & $t^2 + t^4 + t^6 + t^8 + t^{10}$, \hskip3mm & $W_5'$: & $t^5 + t^7 + t^9 + t^{11} + t^{13}$, \\
$Y_3$: & $t^6 + t^{10} + t^{14}$, & $Y_3'$: & $t^1 + t^5 + t^9$, \\
$Z_3$: & $t^8 + t^{10} + t^{12}$, & $Z_3'$: & $t^3 + t^5 + t^7$. \\ 
\end{tabular}
\end{center}

\subsection*{Fake degrees for $G_2(n)$} 
The degree of an irreducible representation of a dihedral group is at most two. 
The group $G_2(n)$ is generated by a rotation $\rho$ of order $n$ and a reflection $\sigma$ with relation $\rho \sigma = \sigma \rho^{-1}$. 
The group $G_2(n)$ has $[\frac{n - 1}{2}]$ representations of degree $2$. 
These representations $W_j$, for $1 \leq j \leq [\frac{n - 1}{2}]$, can be described by the images of $\sigma$ and $\rho$ as follows: 
$$\sigma \mapsto \begin{pmatrix} 1 & 0 \\ 0 & -1 \end{pmatrix} \hskip2mm {\rm and} \hskip2mm \rho \mapsto \begin{pmatrix} \cos j \theta & \sin j \theta \\ - \sin j \theta & \cos j \theta \end{pmatrix} ,$$
where $\theta = \frac{2 \pi \sqrt{-1}}{n}$. 
We take $W_1$ to be the defining representation of the Coxeter group $G_2(n)$. 
By using formula (\ref{eqn:fake}) and the fact that the degrees of $G_2(n)$ are $2$ and $n$, one has $f_{W_j}(t) = t^j + t^{n - j}$. 

If $n = 2k + 1$, then $G_2(n)$ has $2$ one dimensional representations with fake degrees $1$ and $t^{2k + 1}$. 
If $n = 4k$, then it has $4$ representations of degree one and their fake degrees are $1, t^k, t^k$ and $t^{2k}$. 

\section{Polynomial models for finite Coxeter groups}
In this section, we prove the main result of this paper (Theorem \ref{thm:main}). 

By our description of the polynomial model in Theorem \ref{thm:2}, it is enough to study the polynomial models for {\em irreducible} finite Coxeter groups. 
Again using the same description, we connect the study of polynomial models for a finite Coxeter group $G$ with the study of fake degrees of its irreducible representations. 
We then prove the main result using some classical results of Steinberg, Lusztig and others which are recalled at the end of previous section. 
Finally, we give an explicit construction of the polynomial model for dihedral group $G_2(n)$ corresponding to its canonical faithful representation $W_1$. 

\begin{thm}\label{thm:irr}
Let $G$ be an irreducible finite Coxeter group and let $V$ be its canonical faithful representation.
Then the corresponding polynomial model $\n(V)$ of $G$ associated to $V$, constructed as in $\S 2$, is a Gelfand model for $G$ if and only if $G$ is not of the type $W(D_{2n}), W(E_7)$ or $W(E_8)$. 
\end{thm}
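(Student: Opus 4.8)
The plan is to prove Theorem \ref{thm:irr} by combining Corollary \ref{cor:poly:Gelfand} with the case-by-case analysis of fake degrees assembled in Section 3. By Corollary \ref{cor:poly:Gelfand}, the polynomial model $\n(V)$ is a Gelfand model for $G$ if and only if every irreducible representation $U$ of $G$ occurs with multiplicity exactly one in $\A_{p(U)}$, the first homogeneous component of $\A$ in which it appears. By Remark \ref{rem:mult}, this multiplicity is precisely the constant term of the polynomial $g_U(t)$ obtained by writing the fake degree as $f_U(t) = t^{p(U)} g_U(t)$ with $g_U(0) \ne 0$. Thus the entire theorem reduces to a single numerical criterion: \emph{$\n(V)$ is a Gelfand model for $G$ if and only if the constant term of $g_U(t)$ equals $1$ for every irreducible $U$.} My whole task becomes checking this criterion for each type on the classification list of irreducible finite Coxeter groups.

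First I would dispose of the infinite families. For $W(A_n)$, $W(B_n)$ and $W(D_{2n+1})$ this is exactly the content of Lemma \ref{lem:classical}, which states the constant term of $g_U(t)$ is always $1$; so these are Gelfand models. For $W(D_{2n})$, the same lemma exhibits representations $\widetilde{U}_{\a,\b}$ with $|\a|=|\b|$, $\a\ne\b$ whose $g_U(t)$ has constant term $2$, so the criterion fails and $\n(V)$ is not a Gelfand model, matching the excluded type. For the dihedral groups $G_2(n)$, I would read off from the computed fake degrees $f_{W_j}(t) = t^j + t^{n-j}$ that each two-dimensional representation contributes $g_{W_j}(t) = 1 + t^{n-2j}$ (which has constant term $1$, noting $j \le [\frac{n-1}{2}]$ forces $n-2j \ge 1 > 0$), while the listed one-dimensional fake degrees ($1$, $t^{2k+1}$ in the odd case; $1, t^k, t^k, t^{2k}$ in the case $n=4k$) are all monomials and hence contribute constant term $1$ as well. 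Hence $G_2(n)$ always yields a Gelfand model, consistent with the theorem since no dihedral group is excluded.

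Next I would handle the sporadic and exceptional groups. For $H_3$, I would inspect the explicit table of fake degrees given in Section 3: each of the ten listed fake degrees is a sum of distinct monomials with all coefficients $1$, so every $g_U(t)$ has constant term $1$ and $\n(V)$ is a Gelfand model. The genuinely new computational work is the remaining exceptional cases $W(G_2)$, $W(F_4)$, $W(E_6)$, $W(E_7)$, $W(E_8)$ and $H_4$. Here I would use the published tables of fake degrees (Lusztig and others, as cited in \cite{L, BL, AL}), compute for each irreducible $U$ the constant term of $g_U(t)$, and verify: for $W(G_2)$, $W(F_4)$, $W(E_6)$ and $H_4$ that every such constant term is $1$, so these are Gelfand models; and for $W(E_7)$ and $W(E_8)$ that there exists at least one irreducible $U$ whose $g_U(t)$ has constant term $\ge 2$, exhibiting a specific offending representation in each case, so that $\n(V)$ fails to be a Gelfand model for precisely these two exceptional types.

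Combining these checks against the classification list yields exactly the asserted dichotomy: $\n(V)$ is a Gelfand model if and only if $G$ is not of type $W(D_{2n})$, $W(E_7)$ or $W(E_8)$. The main obstacle I anticipate is not conceptual but the reliability of the exceptional-group computation: the $E_7$ and $E_8$ tables are large, and I must locate the specific irreducible representations witnessing a repeated exponent at the lowest occurring degree in the fake degree, since finding even one such representation settles the ``only if'' direction for that type while confirming its absence for $W(G_2)$, $W(F_4)$, $W(E_6)$ and $H_4$ requires scanning every entry. Careful bookkeeping with these tables, rather than any delicate argument, is the crux of the proof.
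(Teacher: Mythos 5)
Your proposal is correct and follows essentially the same route as the paper: reduce via Corollary \ref{cor:poly:Gelfand} and Remark \ref{rem:mult} to checking that the constant term of $g_U(t)$ is $1$ for every irreducible $U$, then run through the classification using Lemma \ref{lem:classical} for the classical types, the Section~3 computations for $H_3$ and the dihedral groups, and the Beynon--Lusztig and Alvis--Lusztig tables for the exceptional types and $H_4$. The paper likewise defers the $E_7$, $E_8$ verification to the cited tables rather than exhibiting an explicit offending representation, so your treatment is, if anything, slightly more explicit in the dihedral case.
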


\begin{proof}
Let $G$ be a finite Coxeter group and let $V$ denote the canonical faithful representation of $G$. 
We recall, from Corollary \ref{cor:poly:Gelfand}, that the polynomial model $\n(V)$ of $G$ is a Gelfand model if and only if the multiplicity of each irreducible representation $U$ of $G$ in $\A_{p(U)}$ is one. 
By Remark \ref{rem:mult}, we further deduce that $\n(V)$ is a Gelfand model for $G$ if and only if for each irreducible representation $U$ of $G$, the polynomial $g_U(t)$ (associated to the fake degree $f_U(t)$ of $U$) has constant term one. 

The irreducible finite Coxeter groups constitute Weyl groups of split simple linear algebraic groups, dihedral groups, the symmetry group of the icosahedron, denoted by $H_3$, and a group which is denoted by $H_4$ (Bourbaki \cite[6.4.1]{B}). 

Beynon-Lusztig, in \cite[$\S \S$ 2, 4, 5]{BL}, have computed the fake degrees of irreducible representations for all exceptional Weyl groups. 
These computations, combined with Lemma \ref{lem:classical}, imply that when $G$ is a Weyl group of a simple algebraic group, the polynomial model $\n(V)$ for $G$ is a Gelfand model if and only if $G$ is not of the type $W(D_{2n}), W(E_7)$ or $W(E_8)$. 

Similarly, from the computation of the fake degrees of irreducible representations of the group $H_4$ in \cite[$\S$ 3]{AL}, it follows that the polynomial model $\n(V)$ associated to the canonical faithful representation of $H_4$ is a Gelfand model. 

Finally, from our computation carried out in the previous section, it follows that the polynomial model for dihedral groups and the group $H_3$ is a Gelfand model. 
\end{proof}

\begin{thm}\label{thm:main}
Let $G$ be a finite Coxeter group and let $V$ be its canonical faithful representation. 
The polynomial model $\n(V)$ for $G$ is a Gelfand model for $G$ if and only if $G$ has no direct factor of the type $W(D_{2n}), W(E_7)$ or $W(E_8)$. 
\end{thm}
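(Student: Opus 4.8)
The plan is to reduce Theorem \ref{thm:main} to the irreducible case, Theorem \ref{thm:irr}, by invoking the multiplicativity of the polynomial model over direct products recorded in Corollary \ref{cor:product}. First I would use the classification of finite Coxeter groups recalled in Section 3: every finite Coxeter group $G$ decomposes, as a Coxeter group, into a direct product $G = \prod_{i=1}^{k} G_i$ of irreducible finite Coxeter groups $G_i$, one for each connected component of the Coxeter graph of $G$.

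The structural point to verify is that this product decomposition is compatible with the canonical faithful representations, so that Corollary \ref{cor:product} genuinely applies. Since the Coxeter graph of $G$ is the disjoint union of the graphs of the $G_i$, the generating set $S$ is partitioned as $S = \bigsqcup_i S_i$ with $m(s,s') = 2$, equivalently $\cos(\pi/m(s,s')) = 0$, whenever $s$ and $s'$ lie in different parts. The defining formula $s(e_{s'}) = e_{s'} + 2\cos(\pi/m(s,s'))\, e_s$ then shows that $G$ leaves each subspace $V_i$ spanned by $(e_s)_{s \in S_i}$ invariant, so the canonical representation splits as $V = \op_i V_i$, where $V_i$ is precisely the canonical faithful representation of the irreducible factor $G_i$. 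Thus $G$ and $V$ fit the hypotheses of Corollary \ref{cor:product} with $W = V$.

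With these identifications in place the remaining argument is immediate. By Corollary \ref{cor:product}, the polynomial model $\n(V)$ is a Gelfand model for $G$ if and only if $\n(V_i)$ is a Gelfand model for $G_i$ for each $i$. By Theorem \ref{thm:irr}, each $\n(V_i)$ is a Gelfand model if and only if the irreducible factor $G_i$ is not of type $W(D_{2n})$, $W(E_7)$ or $W(E_8)$. Combining the two equivalences, $\n(V)$ is a Gelfand model if and only if no factor $G_i$ is of one of these types, which is exactly the assertion that $G$ has no direct factor of type $W(D_{2n})$, $W(E_7)$ or $W(E_8)$.

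I expect no serious obstacle, since all the substantive content has been front-loaded: the case analysis via fake degrees sits in Theorem \ref{thm:irr} (and Lemma \ref{lem:classical}), while the tensor-product behaviour of polynomial models is packaged into Corollary \ref{cor:product}, itself a consequence of the description in Theorem \ref{thm:2}. The only step deserving a word of care is the compatibility of the canonical representation with the factorization of $G$, namely the splitting $V = \op_i V_i$ into canonical representations of the factors; this follows directly from the defining formula together with $m(s,s') = 2$ across distinct components, and it is what licenses the application of Corollary \ref{cor:product}.
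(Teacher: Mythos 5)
Your proposal is correct and follows exactly the paper's route: the paper's proof of Theorem \ref{thm:main} is the one-line deduction from Theorem \ref{thm:irr} and Corollary \ref{cor:product}. Your additional check that the canonical representation of $G$ splits as $V = \op_i V_i$ across the connected components of the Coxeter graph (because $m(s,s')=2$ for generators in different components) is left implicit in the paper but is the right point to verify before invoking Corollary \ref{cor:product}.
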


\begin{proof}
Follows from Theorem \ref{thm:irr} and Corollary \ref{cor:product}. 
\end{proof}

\subsection*{The polynomial model for $G_2(n)$}
In the case of the dihedral group $G_2(n)$ one can explicitly describe the polynomial model corresponding to the representation $W_1$.
Under this representation the images of the elements $\rho$ and $\sigma$ are as follows:
$$\sigma \mapsto \begin{pmatrix} 1 & 0 \\ 0 & -1 \end{pmatrix} \hskip2mm {\rm ~and~} \hskip2mm \rho \mapsto \begin{pmatrix} \cos 2 \pi/n & \sin 2 \pi/n \\ - \sin 2 \pi/n & \cos 2 \pi/n \end{pmatrix} .$$
The group $G_2(n)$ then acts on the polynomial ring $\A = \C[x_1, x_2]$ which we treat as the ring of polynomials in $z$ and $\bz$ over $\C$, where $z = x_1 + i x_2$ and $\bz = x_1 - i x_2$. 
Similarly, the Weyl algebra $\W$ can be treated as the algebra $\C [z, \bz, \p, \bp]$ where $\p = \frac{\p}{\p z}$ and $\bp = \frac{\p}{\p \bz}$. 
The action of $G_2(n)$ on $z, \bz, \p, \bp$ is described by:
\begin{equation}\label{eqn:dihedral:1}
\s(z) = \bz, ~\s(\bz) = z, ~\s(\p) = \bp, ~\s(\bp) = \p,
\end{equation}
\begin{equation}\label{eqn:dihedral:2}
~\r(z) = \x z, ~\r(\bz) = \x^{-1} \bz, ~\r(\p) = \x^{-1} \p ~{\rm and}~ \r(\bp) = \x \bp
\end{equation}
where $\x = e^{2 \pi i/n}$. 

\begin{lem}
With the above notations, the polynomial model $\n(W_1)$, for the group $G_2(n)$ corresponding to its representation $W_1$, is generated as a vector space by the basis:
$$\{1, z, \dots, z^{[\frac{n}{2}]}, \bz, \dots, \bz^{[\frac{n}{2}]}, z^n - \bz^n\} ,$$
where $z = x_1 + i x_2$ and $\bz = x_1 - i x_2$.
\end{lem}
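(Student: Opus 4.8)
The plan is to read off $\n(W_1)$ directly from the description in Theorem \ref{thm:2}, namely $\n(W_1) = \op_U C_U$, where $U$ runs over the irreducible representations of $G_2(n)$ and $C_U$ is the $U$-isotypical component of $\A_{p(U)}$ at the smallest degree $p(U)$ at which $U$ occurs. The action of $\r$ and $\s$ on $z, \bz$ is given explicitly by (\ref{eqn:dihedral:1}) and (\ref{eqn:dihedral:2}), so each homogeneous component $\A_m = \op_{a + b = m} \C\, z^a \bz^b$ decomposes transparently: every monomial $z^a \bz^b$ is an eigenvector of $\r$ with eigenvalue $\x^{a - b}$, while $\s$ interchanges $z^a \bz^b$ and $z^b \bz^a$. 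I would therefore determine, for each irreducible $U$, its first degree of occurrence and the corresponding $C_U$, and then collect these components.

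First I would treat the two-dimensional representations. For $1 \le j \le [\frac{n-1}{2}]$ the span of $z^j$ and $\bz^j$ inside $\A_j$ is $\r$-stable with eigenvalues $\x^{\pm j}$ and is interchanged by $\s$; since $2j \not\equiv 0 \pmod n$ in this range, it is irreducible and isomorphic to $W_j$. A short check shows that $\x^{\pm j}$ cannot occur as an eigenvalue of $\r$ on $\A_m$ for $m < j$, and that $z^j, \bz^j$ are the only monomials of degree $j$ carrying these eigenvalues, so $p(W_j) = j$ and $C_{W_j} = \C z^j \op \C \bz^j$; this is consistent with the constant term $1$ of $g_{W_j}(t)$ recorded in Section 3. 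This already produces the vectors $z^j, \bz^j$ for $1 \le j \le [\frac{n-1}{2}]$.

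Next I would handle the one-dimensional representations. The trivial character occurs first in degree $0$, giving $\C \cdot 1$. The sign character ($\r \mapsto 1$, $\s \mapsto -1$) requires a monomial with $a - b \equiv 0 \pmod n$ and $a \ne b$; for $a + b < n$ this is impossible, while in degree $n$ the antisymmetric combination $z^n - \bz^n$ realizes it, whence $C = \C(z^n - \bz^n)$. When $n$ is even there are two further characters with $\r \mapsto -1$; these first occur in degree $n/2$ through $z^{n/2} + \bz^{n/2}$ and $z^{n/2} - \bz^{n/2}$, whose combined span is $\C z^{n/2} \op \C \bz^{n/2}$.

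Finally I would assemble the direct sum. For $n$ odd the components are $\C \cdot 1$, the spaces $\C z^j \op \C \bz^j$ for $1 \le j \le \frac{n-1}{2} = [\frac{n}{2}]$, and $\C(z^n - \bz^n)$, which is exactly the asserted spanning set. For $n$ even the same list runs over $1 \le j \le \frac{n}{2} - 1$ and is supplemented by the two degree-$n/2$ characters, whose span $\C z^{n/2} \op \C \bz^{n/2}$ fills in $z^{n/2}, \bz^{n/2}$; together with $\C \cdot 1$ and $\C(z^n - \bz^n)$ this again gives $\{1, z, \dots, z^{[n/2]}, \bz, \dots, \bz^{[n/2]}, z^n - \bz^n\}$. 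A dimension count ($n + 1$ for $n$ odd, $n + 2$ for $n$ even) matches the size of this set and confirms that the listed vectors are linearly independent and exhaust $\n(W_1)$. The step requiring the most care is the bookkeeping of first occurrences in the even case: one must verify that the sign character does not appear before degree $n$, and that the two $\r \mapsto -1$ characters recombine with the $W_j$ to yield precisely $z^{n/2}$ and $\bz^{n/2}$ rather than some other degree-$n/2$ pair.
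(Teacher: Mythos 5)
Your proof is correct, but it takes a different route from the paper's. You apply Theorem \ref{thm:2} directly: using the explicit $\r$-eigenvalue $\x^{a-b}$ of each monomial $z^a\bz^b$ and the $\s$-swap, you locate the first occurrence $p(U)$ and the isotypic component $C_U$ for every irreducible $U$ (the $W_j$ in degree $j$, the trivial character in degree $0$, the sign character in degree $n$ via $z^n-\bz^n$, and for $n$ even the two extra characters in degree $n/2$ via $z^{n/2}\pm\bz^{n/2}$), then assemble $\n(W_1)=\op_U C_U$. The paper instead proves the inclusion $\n(W_1)\subseteq V$ by exhibiting two explicit elements of $\z$, namely $\p\bp$ and $z^{n-m}\bp^m+\bz^{n-m}\p^m$ with $m=[\tfrac n2]+1$, whose common kernel in $\A$ is exactly the span of the listed monomials, and then gets the reverse inclusion from the fact that every irreducible occurs in $\n(W_1)$ together with the count $\sum_U\dim U=2[\tfrac n2]+2=\dim V$. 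Your argument is more self-contained: it does not lean on the dimension of the Gelfand model or on the fake-degree computations for $G_2(n)$, and in fact it re-derives the lowest terms of those fake degrees as a byproduct; the cost is the case bookkeeping at degree $n/2$ for $n$ even, which you handle correctly. The paper's argument is shorter but depends on producing the right invariant operators and on an external dimension count. Both are complete proofs.
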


\begin{proof}
Observe that, by formulae (\ref{eqn:dihedral:1}) and (\ref{eqn:dihedral:2}), it follows that $\p \bp$ and $z^{n - m}\bp^m + \bz^{n - m} \p^m$, where $m = [\frac{n}{2}] + 1$, belong to $\z$. 
It follows that $\n(W_1)$ is contained in the vector subspace of $\A$, say $V$, generated by $\{1, z, \dots, z^{[\frac{n}{2}]}, \bz, \dots, \bz^{[\frac{n}{2}]}, z^n - \bz^n \}$. 
Further, by Theorem \ref{thm:main}, $\n(W_1)$ contains each irreducible representation of $G_2(n)$ and the sum of degrees of irreducible representations of $G_2(n)$ is $2 [\frac{n}{2}] + 2$ (\cite{S}). 
It therefore follows that $\n(W_1) = V$ and that $\n(W_1)$ is indeed a Gelfand model for $G$. 
\end{proof}

\end{document}